\title{\bf{A remark on left invariant metrics on compact Lie groups}}
\author{Lorenz J. Schwachh\"ofer\footnote{Research supported by the 
Schwerpunktprogramm Differentialgeometrie of the Deutsche
Forschungsgesellschaft}}
\date{May 7, 2007}
\begin{document}
\maketitle


\newtheorem{thm}{Theorem}[section]
\newtheorem{lem}[thm]{Lemma}
\newtheorem{prop}[thm]{Proposition}
\newtheorem{df}[thm]{Definition}
\newtheorem{cor}[thm]{Corollary}
\newtheorem{rem}[thm]{Remark}
\newtheorem{ex}[thm]{Example}
\newenvironment{proof}{\medskip
\noindent {\bf Proof.}}{\hfill \rule{.5em}{1em}\mbox{}\bigskip}

\def\eps{\varepsilon}

\def\GlR#1{\mbox{\it Gl}(#1,\R)}
\def\glR#1{{\frak gl}(#1,\R)}
\def\GlC#1{\mbox{\it Gl}(#1,\C)}
\def\glC#1{{\frak gl}(#1,\C)}
\def\Gl#1{\mbox{\it Gl}(#1)}

\def\ov{\overline}
\def\ot{\otimes}
\def\und{\underline}
\def\w{\wedge}
\def\ra{\rightarrow}
\def\lra{\longrightarrow}
\def\less{\prec}

\newcommand{\al}{\alpha}
\newcommand{\be}{\beta}
\newcommand{\ga}{\gamma}
\newcommand{\la}{\lambda}
\newcommand{\om}{\omega}
\newcommand{\Om}{\Omega}
\renewcommand{\th}{\theta}
\newcommand{\Th}{\Theta}

\def\pair#1#2{Q\left(#1,#2\right)}
\def\big#1{\displaystyle{#1}}

\def\N{{\Bbb N}}
\def\Z{{\Bbb Z}}
\def\R{{\Bbb R}}
\def\C{{\Bbb C}}
\def\CP{{\Bbb C} {\Bbb P}}\def\P{{\Bbb P}}
\def\Q{{\Bbb Q}}

\def\HP{{\Bbb H} {\Bbb P}}

\def\O{{\cal O}}
\def\H{{\cal H}}
\def\V{{\cal V}}
\def\F{{\cal F}}

\def\so{{\frak {so}}}
\def\co{{\frak {co}}}
\def\su{{\frak {su}}}
\def\uu{{\frak {u}}}
\def\sl{{\frak {sl}}}
\def\sp{{\frak {sp}}}
\def\csp{{\frak {csp}}}
\def\spin{{\frak {spin}}}
\def\g{{\frak g}}
\def\h{{\frak h}}
\def\k{{\frak k}}
\def\m{{\frak m}}
\def\n{{\frak n}}
\def\t{{\frak t}}
\def\s{{\frak s}}
\def\z{{\frak z}}
\def\p{{\frak p}}
\def\L{{\frak L}}
\renewcommand{\l}{{\frak l}}
\def\X{{\frak X}}
\def\gl{{\frak {gl}}}
\def\hol{{\frak {hol}}}
\renewcommand{\frak}{\mathfrak}
\renewcommand{\Bbb}{\mathbb}

\def\be{\begin{equation}}
\def\ee{\end{equation}}
\def\bi{\begin{enumerate}}
\def\ei{\end{enumerate}}
\def\ba{\begin{array}}
\def\ea{\end{array}}
\def\bea{\begin{eqnarray}}
\def\eea{\end{eqnarray}}
\def\ben{\begin{enumerate}}
\def\een{\end{enumerate}}

\def\dq{\slash \!\!\!\! \slash}
\def\codim{\mbox{\rm codim}}


\section{Introduction}

The investigation of manifolds with non-negative sectional curvature is one of the classical fields of study in global Riemannian geometry. While there are few known obstruction for a closed manifold to admit metrics of non-negative sectional curvature, there are relatively few known examples and general construction methods of such manifolds (see \cite{Z} for a detailed survey).

In this context, it is particularly interesting to investigate left invariant metrics on a compact connected Lie group $G$ with Lie algebra $\g$. These metrics are obtained by left translation of an inner product on $\g$. If this metric is  biinvariant then its sectional curvature is non-negative, and it is known that the set of inner products on $\g$ whose corresponding left invariant metric on $G$ has non-negative sectional curvature is a connected cone; indeed, each such inner product can be connected to a biinvariant one by a canonical path (\cite{T}).

In the present article, it is shown that the stretching of the biinvariant metric in the direction of a subalgebra of $\g$ almost always produces some negative sectional curvature of the corresponding left invariant metric on $G$. In fact, the following theorem answers a question raised in \cite[Problem 1, p.9]{Z}.

\begin{thm} \label{main}
Let $H \subset G$ be compact Lie groups with Lie algebras $\h \subset \g$, let $Q$ be a biinvariant inner product on $\g$, and for $t > 0$ let $g_t$ be the left invariant metric on $G$ induced by the inner product
\be \label{def:Q_t}
Q_t := t\ Q|_\h + Q|_{\h^\perp}.
\ee
If there is a $t > 1$ such that $g_t$ has non-negative sectional curvature, then then the semi-simple part of $\h$ is an ideal of $\g$.
\end{thm}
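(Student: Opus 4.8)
First I would fix orthogonal decompositions with respect to $Q$. Write $\g = \h \oplus \m$ with $\m = \h^\perp$, and decompose $\h = \z \oplus \h'$ into its center $\z$ and semisimple part $\h' = [\h,\h]$, again $Q$-orthogonally. The metric $Q_t$ rescales $\h$ by $t$ and leaves $\m$ fixed, so I should compute the curvature of the left invariant metric $g_t$ in terms of $t$ and the Lie bracket structure. The cleanest tool is the formula for the sectional curvature of a left invariant metric, which for a biinvariant background $Q$ and a rescaling simplifies considerably; I would use Püttmann's or O'Neill-type curvature formulas, or better, the known description of $g_t$ as obtained from a Riemannian submersion / Cheeger deformation when $t<1$, and analyze what happens for $t>1$ directly via the curvature tensor. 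The key algebraic input is that for $X,Y \in \h'$ one has $Q([X,Y],Z)$ is "as large as it can be" because $\h'$ is semisimple (no abelian directions to hide in), whereas the obstruction to $\h'$ being an ideal is exactly the non-vanishing of $[\m,\h']$ projected appropriately.

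**The curvature computation and the test planes.** The plan is to evaluate the sectional curvature $K_t$ of $g_t$ on carefully chosen $2$-planes spanned by one vector $X \in \h'$ and one vector $U \in \m$. Using the left invariant curvature formula with inner product $Q_t$, the sectional curvature of such a plane will be a rational function of $t$ whose numerator is a polynomial with coefficients built from $Q([X,U],\cdot)$, $Q([X,\cdot],\cdot)$ restricted to $\h'$, and the $\h$- versus $\m$-components of various brackets. I expect the leading behavior as $t \to \infty$ to be governed by the term coming from stretching $\h'$: schematically, the dominant contribution to $\langle R(X,U)U,X\rangle_{Q_t}$ should be a negative multiple of $t \, |[X,U]_{\h'}|^2$ or of $t\,|[X,U]_\m|^2$ divided by a positive power of $t$, so that if the relevant projection of $[X,U]$ is nonzero, $K_t$ becomes negative for large $t$ — and by the cone/monotonicity structure (each non-negatively curved $Q_t$ connects to the biinvariant one, cf. \cite{T}), negativity for one large $t$ propagates, so having even one $t>1$ with $K_t \geq 0$ forces that projection to vanish. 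Carrying this out for enough pairs $(X,U)$, varying $U$ over $\m$ and $X$ over $\h'$, should yield $[\h',\m] \subset \h$, and then a second family of planes (or a bracket/Jacobi argument) should promote this to $[\h',\m] = 0$ combined with $[\h',\h] \subset \h'$, i.e. $\h'$ is an ideal of $\g$.

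**The main obstacle.** The hard part will be isolating the correct sign and order of the dominant term in $t$ in the sectional curvature, because the left invariant curvature formula has several competing terms (the "$\frac14|[X,Y]|^2$"-type terms, the mixed terms involving $\mathrm{ad}^*$, and the terms from non-biinvariance of $Q_t$), and some of these scale with $t$, some with $1/t$, and some are constant. I would need to choose the $2$-plane (possibly not spanned by $Q$-orthonormal vectors, but by vectors adapted so that their $Q_t$-norms and the cross terms are controlled) so that exactly the obstruction term survives in the limit with a definite negative sign. A natural way to guarantee this is to pick $X \in \h'$ and $U \in \m$ with $[X,U] \neq 0$ and then rescale $X \mapsto X/\sqrt t$ (so its $Q_t$-length is fixed), reducing the sectional curvature to a manageable limit; semisimplicity of $\h'$ enters precisely to rule out the degenerate case where such an $X$ cannot be found, i.e. where $[\h',\m]$ lies in a direction that the rescaling cannot detect. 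Once the limiting inequality $K_\infty \leq 0$ with equality iff the obstruction vanishes is established, the algebraic conclusion that the semisimple part of $\h$ is an ideal follows by letting $X,U$ range over bases.
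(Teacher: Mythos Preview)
Your proposed test planes do not work. Take $X \in \h'$ and $U \in \m$, so $x_\h = X$, $x_\m = 0$, $y_\h = 0$, $y_\m = U$. Using the explicit curvature formula for $g_t$ (as in \cite{GZ1}, reproduced in the paper), one gets simply
\[
Q_t(R^t(X,U)U,X) \;=\; \tfrac14 \,\|\,t[X,U]\,\|_Q^2 \;=\; \tfrac{t^2}{4}\,\|[X,U]\|_Q^2 \;\ge\; 0
\]
for \emph{every} $t$, since all other terms involve $[x_\h,y_\h]$ or $[x_\m,y_\m]$, both of which vanish. So planes with one leg in $\h'$ and one in $\m$ never acquire negative curvature under this deformation; no asymptotic analysis in $t$ will extract an obstruction from them. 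The ``obstruction term'' you hope to isolate is simply not present for this family of planes.

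The paper's key idea, which your plan misses, is to test on planes spanned by $x,y$ \emph{both} of which have nontrivial $\h$- and $\m$-components, chosen so that the twisted vectors $x^t = t x_\h + x_\m$ and $y^t = t y_\h + y_\m$ commute. This commutation forces $[x_\m,y_\m]_\h = -t^2[x_\h,y_\h]$ and kills the first (manifestly non-negative) term in the curvature formula entirely; what remains collapses to the single expression $-\tfrac14\,t(t-1)^3(1+3t)\,\|[x_\h,y_\h]\|_Q^2$, which is strictly negative for every $t>1$ unless $[x_\h,y_\h]=0$. No $t\to\infty$ asymptotics or monotonicity are needed (and indeed your appeal to the cone structure to propagate negativity from large $t$ down to a given $t>1$ is not justified by what is in \cite{T}). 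The resulting algebraic criterion---$[x,y]=0 \Rightarrow [x_\h,y_\h]=0$---is then converted into ``semisimple part of $\h$ is an ideal'' by a separate, nontrivial Lie-algebraic argument (power series in $Ad_{\exp(ta)}$, a spanning lemma for ideals, and a rank argument), which your sketch also underestimates as a routine Jacobi computation.
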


Note that this condition is indeed optimal: if $t \leq 1$ then $g_t$ is known to have non-negative sectional curvature, and if the semi-simple part of $\h$ is an ideal of $\g$ then $g_t$ has non-negative sectional curvature even for $t \leq 4/3$ (\cite{GZ1}).

There is yet another reason why this result is of interest. One of the most spectacular source of examples of manifolds of non-negative sectional curvature of the last decade was given in \cite{GZ1} where it was shown that any closed cohomogeneity one manifold whose non-principal orbits have codimension at most two admit invariant metrics of non-negative sectional curvature. Their construction is based on glueing homogeneous disk bundles of rank $\leq 2$ along a totally geodesic boundary which is equipped with a normal homogeneous metric. 

The reason for this construction to work is due to the fact that the structure group of the fibers is contained in $H = SO(k)$ where $k$ is the rank of the bundle. If $k \leq 2$, then $H$ is abelian, so that the metrics $g_t$ from Theorem \ref{main} have non-negative sectional curvature for some $t > 1$. 

Our result now suggests that for most subgroups $H' \subset H$, the metric on $G/H'$ induced by the metric $g_t$ with $t  > 1$ from Theorem \ref{main} will have some negative sectional curvature as well. Therefore, it will be difficult to find more examples of non-negatively curved metrics on homogeneous vector bundles over $G/H$ with normal homogeneous collar. Also, note that there are examples of cohomogeneity one manifolds, including the Kervaire spheres, which do not admit invariant metric of non-negative sectional curvature at all (\cite{GVWZ}).


\section{Proof of Theorem \ref{main}}

Let $H \subset G$, $\h \subset \g$, $Q_t$ and $g_t$ be as in Theorem \ref{main}, and let $\m := \h^\perp$, so that we have the orthogonal splitting
\be \label{eq:g=h+m}
\g = \h \oplus \m.
\ee

\noindent Then a calculation shows that for any $s > 0$ and $t := s/(1+s)$, the multiplication map 
\be
(H \times G,  s Q|_\h + Q|_\g) \longrightarrow (G, g_t)
\ee
becomes a Riemannian submersion (cf. e.g. \cite{Ch}). But $s Q|_\h + Q|_\g$ is a biinvariant metric on $H \times G$ which therefore has non-negative sectional curvature, and by O'Neill's formula so does $Q_t$. Since $Q_1 = Q$ is a biinvariant metric, and any $t \in (0,1)$ can be written as $t = s/(1+s)$ for some $s > 0$, we conclude that $Q_t$ has non-negative sectional curvature for all $t \leq 1$. 

We shall divide the proof of Theorem \ref{main} into two lemmas.

\begin{lem} \label{lem:criterion}
Suppose that the metric $Q_t$ on $G$ has non-negative sectional curvature for some $t > 1$. Then for all $x, y \in \g$ with $[x, y] = 0$ we must have $[x_\h, y_\h] = 0$, where $x = x_\h + x_\m$ and $y = y_\h + y_\m$ is the decomposition according to (\ref{eq:g=h+m}).
\end{lem}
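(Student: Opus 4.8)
The plan is to argue by contradiction. Suppose $g_t$ has non-negative sectional curvature for some $t>1$, let $x,y\in\g$ satisfy $[x,y]=0$, set $u:=[x_\h,y_\h]$, and assume $u\neq0$; I will produce a $2$-plane on which the sectional curvature of $g_t$ is negative.

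First I would record the one piece of Lie algebra I need. Since $Q$ is biinvariant each $\mathrm{ad}_v$ is $Q$-skew, so $Q([h,m],h')=-Q(m,[h,h'])=0$ for $h,h'\in\h$, $m\in\m$; hence $[\h,\m]\subseteq\m$ (while of course $[\h,\h]\subseteq\h$). Decomposing $0=[x,y]$ along $\g=\h\oplus\m$ and projecting to $\h$ gives
\[
[x_\h,y_\h]+\bigl([x_\m,y_\m]\bigr)_\h=0,\qquad\mbox{so}\qquad\bigl([x_\m,y_\m]\bigr)_\h=-u .
\]
Thus the negation of the claim is equivalent to $\bigl([x_\m,y_\m]\bigr)_\h\neq0$; moreover $u\neq0$ forces $x_\m\neq0$, $y_\m\neq0$ (if $x_\m=0$ then $x\in\h$, and the two components of $[x,y]=0$ vanish separately, so $[x_\h,y_\h]=[x,y_\h]=0$) and forces both $\{x_\h,y_\h\}$ and $\{x_\m,y_\m\}$ to be linearly independent.

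Next I would turn the curvature into algebra. Write $P_\h$ for the $Q$-orthogonal projection onto $\h$ and $\phi:=\mathrm{id}+(t-1)P_\h$, so $Q_t=Q(\phi\,\cdot\,,\,\cdot\,)$, and let $U(v,w):=\frac12\bigl(\nabla^{g_t}_vw+\nabla^{g_t}_wv\bigr)$. Koszul's formula and biinvariance of $Q$ give $2\,\phi\,U(v,w)=[v,\phi w]+[w,\phi v]=(t-1)\bigl([v,w_\h]+[w,v_\h]\bigr)$. Plugging this into Milnor's formula for the curvature of a left-invariant metric one gets, on a $2$-plane lying inside $\m$, the clean identity
\[
\langle R^{g_t}(v,w)w,v\rangle=\Bigl(1-\frac{3t}{4}\Bigr)\bigl|([v,w])_\h\bigr|^2_Q+\frac14\bigl|([v,w])_\m\bigr|^2_Q ,
\]
which is $\ge0$ for $t\le 4/3$: so $\m$-planes alone cannot detect the failure, and the offending plane has to be genuinely mixed. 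I would therefore test the one-parameter family
\[
\Pi_s:=\mathrm{span}\bigl(x_\h+s\,y_\m,\ y_\h-s\,x_\m\bigr),\qquad s\in\R ,
\]
which interpolates between $\mathrm{span}(x_\h,y_\h)$ at $s=0$ and, after rescaling, $\mathrm{span}(x_\m,y_\m)$ as $s\to\infty$. Substituting $\Pi_s$ into Milnor's formula and contracting every bracket term with biinvariance and the identity $([x_\m,y_\m])_\h=-u$, the (unnormalized) sectional curvature of $\Pi_s$ becomes a polynomial in $s$ whose value at $s=0$ is $\frac t4|u|^2_Q>0$ but which, precisely because $t>1$, becomes strictly negative for some $s$. (In the model case $\h=\su(2)$ sitting diagonally in $\su(2)\oplus\su(2)$ one computes that this minimum equals $-\,t(t-1)^3/(4-3t)$ for $1<t<4/3$ and is $-\infty$ for $t\ge 4/3$, while $s=1$ gives curvature $\propto(t-1)^2$ -- so one really must optimise over $s$.) As $\Pi_s$ is a genuine $2$-plane for generic $s$ and negativity is an open condition, this contradicts $g_t\ge0$, forcing $u=0$.

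The hard part will be the last computation done in full generality: without the symmetric-pair simplification the $\m$-parts of $[x_\h,y_\m]$, $[x_\m,y_\h]$ and $([x_\m,y_\m])_\m$ no longer align, so one may have to enlarge $\Pi_s$ to a two- or three-parameter family of mixed planes and minimise. The delicate point is that the negative contribution has to overcome the fixed positive term $\frac t4|u|^2_Q$, and it is exactly the strict inequality $t>1$ (equivalently $(t-1)^3>0$) that makes this work -- in harmony with the fact, recalled after the theorem, that for $t\le1$ no such negative plane can exist.
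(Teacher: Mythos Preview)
Your proposal has a genuine gap: you never carry out the central computation. You introduce the family $\Pi_s$, verify negativity in one symmetric-pair example, and then concede that in general ``the $\m$-parts \ldots no longer align'' and that you ``may have to enlarge $\Pi_s$ to a two- or three-parameter family of mixed planes and minimise.'' That is precisely the heart of the argument, and leaving it open means the lemma is not proved; nothing you have written guarantees that optimising over your family (or some enlargement of it) produces a negative value in general.

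The paper avoids this search entirely by making a single explicit choice of plane. Given commuting $X,Y\in\g$, it evaluates the curvature formula
\[
Q_t(R^t(x,y)y,x)=\tfrac14\bigl\|[x_\m,y_\m]_\m+t[x_\h,y_\m]+t[x_\m,y_\h]\bigr\|_Q^2+\tfrac t4\|[x_\h,y_\h]\|_Q^2+\tfrac t2(3-2t)\,Q([x_\h,y_\h],[x_\m,y_\m]_\h)+\bigl(1-\tfrac{3t}{4}\bigr)\|[x_\m,y_\m]_\h\|_Q^2
\]
at the pair $x:=\tfrac1t X_\h+X_\m$, $y:=\tfrac1t Y_\h+Y_\m$. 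The point of this rescaling is that $tx_\h+x_\m=X$ and $ty_\h+y_\m=Y$, so the first (always non-negative) term is $\tfrac14\|[X,Y]_\m\|_Q^2=0$, while $[x_\m,y_\m]_\h=[X,Y]_\h-t^2[x_\h,y_\h]=-t^2[x_\h,y_\h]$. Substituting, the remaining three terms collapse to the closed form
\[
Q_t(R^t(x,y)y,x)=-\tfrac14\,t\,(t-1)^3(1+3t)\,\|[x_\h,y_\h]\|_Q^2,
\]
strictly negative for $t>1$ unless $[x_\h,y_\h]=0$, i.e.\ unless $[X_\h,Y_\h]=0$. No optimisation is needed; both the swap of $\m$-components in your $\Pi_s$ and the free parameter $s$ become unnecessary once one sees this rescaling, which is designed exactly to kill the troublesome non-negative term.
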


\begin{proof}
The curvature tensor $R^t$ of the metric $g_t$ has been calculated e.g. in \cite{GZ1}. Namely, for elements $x = x_\h + x_\m$ and $y = y_\h + y_\m$ we have
\be \label{eq:curvature} \ba{llll}
Q_t(R^t(x, y)y, x) & = & & \frac14 ||\ [x_\m, y_\m]_\m + t [x_\h, y_\m] + t [x_\m, y_\h]\ ||_Q^2\\
& & + & \frac14 t ||\ [x_\h, y_\h]\ ||_Q^2 + \frac12 t (3 - 2t) Q([x_\h, y_\h], [x_\m, y_\m]_\h) + (1 - \frac34 t) 
||\ Ê[x_\m, y_\m]_\h\ ||_Q^2.
\ea \ee

Let $x^t := t x_\h + x_\m$ and $y^t := t y_\h + y_m$. Then, using that $[\h, \m] \subset \m$, it follows that
\[
[x^t, y^t]_\h = t^2 [x_\h, y_\h] + [x_\m, y_\m]_\h \mbox{\hspace{1cm} and \hspace{1cm}} [x^t, y^t]_\m =  [x_\m, y_\m]_\m + t [x_\h, y_\m] + t [x_\m, y_\h].
\]
If we assume that $[x^t, y^t] = 0$, then $[x_\m, y_\m]_\h = - t^2 [x_\h, y_\h]$ and $[x_\m, y_\m]_\m + t [x_\h, y_\m] + t [x_\m, y_\h] = 0$. Substituting this into (\ref{eq:curvature}) yields
\be \label{eq:curvature2} \ba{lll}
Q_t(R^t(x, y)y, x) & = & \left( \frac14 t - \frac12 t^3 (3 - 2t) + (1 - \frac34 t) t^4\right) ||\ [x_\h, y_\h]\ ||_Q^2\\ \\
& = & - \frac14 t (t - 1)^3 (1 + 3 t) ||\ [x_\h, y_\h]\ ||_Q^2.
\ea \ee

If this expression is non-negative for some $t > 1$, then $[x_\h, y_\h] = 0$. Thus, $[x^t_\h, y^t_\h] = t^2 [x_\h, y_\h] = 0$ whenever $[x^t, y^t] = 0$.
\end{proof}

\begin{lem} Let $\h \subset \g$ be a Lie subalgebra such that all $x, y \in \g$ with $[x, y] = 0$ satisfy $[x_\h, y_\h] = 0$, where $x = x_\h + x_\m$ and $y = y_\h + y_\m$ is the decomposition according to (\ref{eq:g=h+m}). Then the semi-simple part of $\h$ is an ideal of $\g$.
\end{lem}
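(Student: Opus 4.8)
The plan is to reduce everything to the semi-simple part $\k:=[\h,\h]$ of $\h$, so that $\h=\z(\h)\oplus\k$ with $\k$ an ideal of $\h$. First I would observe that $\k$ is an ideal of $\g$ if and only if $[\m,\k]=0$: indeed $[\g,\k]=[\h,\k]+[\m,\k]=\k+[\m,\k]$ since $[\h,\k]=\k$, while $[\m,\k]\subseteq[\m,\h]\subseteq\m$ because $\m=\h^\perp$ is $\mathrm{ad}_\h$-invariant; hence $[\g,\k]\subseteq\k$ forces $[\m,\k]\subseteq\m\cap\k=0$, and the converse is clear. So it suffices to prove $[\m,\k]=0$.

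Next I would reformulate the hypothesis via the orthogonal projection $\pi_\k\colon\g\to\k$. Since $\k\subseteq\h$ one has $\pi_\k=\pi_\k\circ\pi_\h$, and since $\z(\h)$ is central in $\h$ one has $\pi_\k([\xi,\eta])=[\pi_\k\xi,\pi_\k\eta]$ for $\xi,\eta\in\h$; hence for commuting $x,y\in\g$ the hypothesis yields $[\pi_\k x,\pi_\k y]=\pi_\k([x_\h,y_\h])=0$. Thus it is enough to prove the contrapositive: if $[\m,\k]\neq0$ then there is an $x\in\g$ with $[\pi_\k x,\k]\not\subseteq\mathrm{im}(\mathrm{ad}\,x)=\z_\g(x)^\perp$ — for then any $z\in\z_\g(x)$ not $Q$-orthogonal to $[\pi_\k x,\k]$ commutes with $x$ while $[\pi_\k z,\pi_\k x]\neq0$.

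So assume $[\m,\k]\neq0$, and decompose $\g$ under $\mathrm{ad}_\k$ as $\g=\z_\g(\k)\oplus\g_+$ into its trivial and non-trivial parts. Since $\k$ is semi-simple, $\k\subseteq\g_+$, and $W:=\g_+\ominus\k$ is a $\k$-submodule of $\m$ with no trivial summand; moreover $W\neq0$, since $W=0$ would give $\g=\z_\g(\k)\oplus\k$ and $[\g,\k]=\k$, contradicting $[\m,\k]\neq0$. To produce the required $x$ I would take a commuting pair $a\in\k^\perp$, $b\in\k$ together with $v\in\k^\perp$ and form $x:=\mathrm{Ad}(\exp(tv))\,a$, $z:=\mathrm{Ad}(\exp(tv))\,b$, which commute for all $t$. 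Using $\pi_\k a=0$, $\pi_\k b=b$ and $[v,b]\in[\k^\perp,\k]\subseteq\k^\perp$ one finds
\[
\pi_\k x=t\,\pi_\k[v,a]+\frac{t^2}{2}\,\pi_\k[v,[v,a]]+O(t^3),\qquad \pi_\k z=b+\frac{t^2}{2}\,\pi_\k[v,[v,b]]+O(t^3)
\]
so that $[\pi_\k x,\pi_\k z]=t\,[\pi_\k[v,a],b]+\frac{t^2}{2}\,[\pi_\k[v,[v,a]],b]+O(t^3)$. As $\{\pi_\k[v,a]:v\in\k^\perp\}$ is exactly $\k\ominus\z_\k(a)$, the linear coefficient becomes non-zero for some $v$ once $a\in\k^\perp$ can be chosen with $[\k\ominus\z_\k(a),\z_\k(a)]\neq0$; if no such $a$ exists one passes to the quadratic coefficient.

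The step I expect to be the main obstacle is to show that $W\neq0$ actually forces one of these coefficients to be non-zero for a suitable $a,b,v$; everything else is formal. For this I would use the $\k$-module structure of $W$: since $W$ carries no trivial summand, a maximal torus of $\k$ acts non-trivially on it, and the Casimir identity $\sum_i\langle[\xi_i,w],[\xi_i,w']\rangle=-\langle\mathrm{Cas}_\k\,w,w'\rangle$ (sum over a $Q$-orthonormal basis of $\k$, with $\mathrm{Cas}_\k$ negative definite on $W$) prevents the relevant brackets of elements of $W$ from all vanishing. Turning this positivity into the non-vanishing of the first or second Taylor coefficient above — in particular dealing with the delicate case in which $\z_\g(\k)=0$ and every candidate $a$ lies in $W$ — is the only genuinely computational part of the argument.
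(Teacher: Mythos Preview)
Your setup is sound: the reduction to $[\m,\k]=0$, the reformulation via $\pi_\k$, and the device of conjugating a commuting pair by $\exp(tv)$ and reading off Taylor coefficients are all correct, and the paper uses exactly this Taylor-expansion trick as its basic analytic tool. But the proposal stops precisely where you yourself flag ``the main obstacle'': you have not shown that $W\neq0$ forces a nonvanishing first- or second-order coefficient, and the Casimir sketch does not do this. The Casimir identity tells you only that the $\k$-action on $W$ is nondegenerate; what you actually need is that some $\pi_\k[v,a]$ or $\pi_\k[v,[v,a]]$ fails to commute with a pre-chosen $b\in\z_\k(a)$. Before that you must even exhibit a commuting pair $a\in\k^\perp$, $0\neq b\in\k$ --- and when $\z_\g(\k)=0$ this already requires finding $w\in W$ with $\z_\k(w)\neq 0$, which you have not argued. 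Controlling the second-order term when the first vanishes is likewise left completely open. None of this is routine computation; it needs structural input about compact Lie algebras that your outline does not supply. So there is a genuine gap.

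The paper fills it by a different route. It first reduces to $\h$ \emph{simple}, then reverses the roles in your ansatz: it starts from $x\in\h$, $y\in\m$ with $[x,y]=0$, applies the Taylor expansion, and combines it with the structural fact (quoted from \cite{S}) that a simple $\h$ is spanned by $x$, $[x,\h]$ and $[[x,\h],[x,\h]]$ to deduce the strong conclusion $[\h,y]=0$. A second application of the same span lemma shows that the ideals of $\g$ generated by $\h$ and by any such $y$ are $Q$-orthogonal; finally a short rank argument (two commuting elements of $\h$ acting on $\m\cap\g'$ must share a kernel on each invariant $2$-plane) forces $\m\cap\g'=0$, i.e.\ $\h\lhd\g$. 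The span lemma and the rank argument are exactly the structural ingredients your outline is missing.
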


\begin{proof} Let $\h = \z(\h) \oplus \h_1 \oplus \ldots \oplus \h_r$ be the decomposition into the center and simple ideals. Then $[x_\h, y_\h] = 0$ iff $[x_{\h_k}, y_{\h_k}] = 0$ for all $k$. Also, the semi-simple part of $\h$ is an ideal of $\g$ iff $\h_k \lhd \g$ for all $k$. Thus, it sufices to show the lemma for all $\h_k$, whence we shall assume for the rest of the proof that $\h$ is {\em simple}.

\

\noindent {\em Step 1. Let $y \in \m$ be such that there is an $0 \neq x \in \h$ with $[x,y] = 0$. Then $[\h, y] = 0$.}

\

For any $a \in \m$ and $t \in \R$, we have $[Ad_{\exp(ta)} x, Ad_{\exp(ta)} y] = Ad_{\exp(ta)} [x, y] = 0$, hence by hypothesis $[(Ad_{\exp(ta)} x)_\h, (Ad_{\exp(ta)} y)_\h] = 0$.

But $[a, x] \in [\m, \h] \subset \m$, hence $(Ad_{\exp(ta)} x)_\h = x + O(t^2)$, whereas $(Ad_{\exp(ta)} y)_\h = t [a,y]_\h  + \frac12 t^2 [a, [a, y]]_\h + O(t^3)$. Therefore, for all $t \in \R$ we have
\be \label{eq:powerseries} \ba{lll}
0 & = & \left[(Ad_{\exp(ta)} x)_\h, (Ad_{\exp(ta)} y)_\h\right] = t [x, [a,y]_\h] + \frac12 t^2 [x, [a, [a, y]]_\h] + O(t^3)\\ \\
& = & t [x, [a,y]]_\h + \frac12 t^2 [x, [a, [a, y]]]_\h + O(t^3).
\ea
\ee
The last equation follows since for all $x \in \h$ and $z = z_\h + z_\m$ we have $[x, z_\h] \in \h$ and $[x, z_\m] \in \m$, whence $[x, z_\h] = [x,z]_\h$. Thus, we must have $[x, [a,y]]_\h = 0$ for all $a \in \m$. On the other hand, if $a \in \h$ then $[x, [a,y]] \in [\h, [\h, \m]] \subset \m$, hence $[x, [a,y]]_\h = 0$ for all $a \in \h$ as well, and therefore,

\be \label{eq:yxh=0} \ba{lll}
0 = Q([x, [\g,y]], \h) = Q(\g, [[x, \h], y]), & \mbox{i.e.,} & [[x, \h], y] = 0.
\ea \ee

By \cite[Lemma 4.4]{S} and the simplicity of $\h$, it follows that $\h$ is the linear span of $x$, $[x, \h]$ and $[[x, \h], [x, \h]]$. Since $[x,y] = 0$, and $[[x, \h], y]  = 0$ by (\ref{eq:yxh=0}), this together with the Jacobi identity now implies that $[\h, y] = 0$ as claimed.

\

\noindent {\em Step 2. Let $y \in \m$ be such that $[\h, y] = 0$. Let $\g' \lhd \g$ and $\g'' \lhd \g$ be the ideals generated by $\h$ and $y$, respectively. Then $Q(\g', \g'') = 0$ and $[\g', \g''] = 0$. In particular, $Q(\g', y) = 0$}

\

First, note the it suffices to show that $Q(\h, \g'') = 0$. For if this is the case, it then follows that $Q(ad(\g)^n(\h), \g'') = Q(\h, ad(\g)^n(\g'')) = Q(\h, \g'') = 0$, which implies that $Q(\g', \g'') = 0$. Hence, $Q([\g', \g''], \g) = Q(\g',[\g'', \g]) = Q(\g', \g'') = 0$ so that $[\g', \g''] = 0$ follows.

By \cite[Lemma 4.4]{S}, $\g''$ is the linear span of $y$, $[\g, y]$ and $[\g, [\g, y]]$. Since $y \in \m$, we have $Q(y, \h) = 0$, and $Q([\g, y], \h) = Q(\g, [\h, y]) = 0$ by hypothesis. Thus, $Q(\h, \g'') = 0$ will be demonstrated once we show that $Q([\g, [\g, y]], \h) = 0$.

For a fixed $h \in \h$, we define the bilinear form $\al_h$ on $\g$ by
\[
\al_h(a,b) := Q([a, [b, y]], h).
\]

Thus, our goal shall be to show that $\al_h = 0$ for all $h \in \h$. Note that $\al_h(a,b) - \al_h(b,a) = Q([a, [b, y]] - [b, [a, y]], h) = Q([[a,b], y], h) = -Q([a,b], [h, y]) = 0$ by hypothesis, hence $\al_h$ is {\em symmetric}. If $b \in \h$, then $[b, y] = 0$ by hypothesis, so that $\al_h(\g,\h) = 0$.

By or hypothesis and step 1, (\ref{eq:powerseries}) holds for {\em all} $x \in \h$, thus the vanishing of the $t^2$-coefficient of (\ref{eq:powerseries}) implies that 
\[
0 = Q([\h, [a, [a, y]], \h) = Q([a, [a, y]], [\h, \h]) = Q([a, [a, y]], \h)  \mbox{ for all $a \in \m$}.
\]
Thus, $\al_h(a,a) = 0$ for all $a \in \m$ and therefore, $\al_h = 0$ for all $h \in \h$ as asserted.

\

\noindent {\em Step 3. $\h \lhd \g$.}

\

Let $\g' \lhd \g$ be the ideal generated by $\h$. By steps 1 and two, it follows that there cannot be an $0 \neq x \in \h$ and $0 \neq y \in \m \cap \g'$ with $[x, y] = 0$. This immediately implies that $rk(\h) = rk(\g')$.

If $rk(\h) = rk(\g') = 1$, then $\h = \g' \lhd \g$ and we are done. If $rk(\h) \geq 2$ then we can choose linearly independent elements $x_1, x_2 \in \h$ with $[x_1, x_2] = 0$. If $\m \cap \g' \neq 0$, then the restrictions of $ad_{x_i}$ to $\m \cap \g'$ have common eigenspaces, i.e., there is an orthogonal decomposition 
\[
\m \cap \g' = V_1 \oplus \ldots \oplus V_m
\]
into two-dimensional subspaces $V_k$ on which both $ad_{x_i}$ act by a multiple of rotation by a right angle. Therefore, for each $k$, there is a suitable $0 \neq x^k \in span(x_1, x_2) \subset \h$ such that $[x^k, V_k] = 0$ which is a contradiction. Therefore, $\m \cap \g' = 0$, i.e., $\h = \g' \lhd \g$.
\end{proof}


{\sc
\noindent
Fachbereich Mathematik, Universit\"at Dortmund, 44221 Dortmund, Germany
\

\noindent
Email:\, 
{\tt lschwach@math.uni-dortmund.de}
}


\begin{thebibliography}{99}

{\small

\bibitem[Ch]{Ch} J. Cheeger,
        {\it  Some examples of manifolds of non-negative curvature},
        J. Diff. Geom. 8 (1973) 623--628

\bibitem[GVWZ]{GVWZ} K.Grove, L.Verdiani, B.Wilking and W.Ziller, {\em Non-negative curvature obstruction in cohomogeneity one and the Kervaire spheres}, Ann. del. Scuola Norm. Sup. {\bf 5}159-170 (2006)

\bibitem[GZ]{GZ1} K. Grove and W. Ziller,
        {\it  Curvature and symmetry of Milnor spheres},
        Ann. of Math. (2) 152 (2000) 331--36

\bibitem[S]{S} L.J. Schwachh\"ofer, {\it On the Ricci curvature of normal metrics on biquotients}, 
(preprint) available at arXiv:math.DG/0702612 (2007)

\bibitem[ST]{ST} L.J. Schwachh\"ofer, W. Tuschmann, {\it Metrics of positive Ricci curvature 
on quotient spaces}, Math. Ann. {\bf 330} (1), 59 -- 91 (2004)

\bibitem[T]{T} K. Tapp, {\it Homogeneous metrics with nonnegative curvature}, (preprint)

\bibitem[Z]{Z} W. Ziller, {\it Examples of Riemannian mannifolds with non-negative sectional curvature}, (preprint) available at arXiv:math.DG/0701389 (2007)
}
\end{thebibliography}
\end{document}